\def\namedlabel#1#2{\begingroup
#2%
\def\@currentlabel{#2}%
\phantomsection\label{#1}\endgroup
}
\newtheorem{theorem}{Theorem}[section]
\newtheorem{corollary}[theorem]{Corollary}
\newtheorem{lemma}[theorem]{Lemma}
\newtheorem{proposition}[theorem]{Proposition}
\newtheorem{algorithm}[theorem]{Algorithm}
\theoremstyle{definition} 
\newtheorem{definition}[theorem]{Definition}
\newtheorem{example}[theorem]{Example}
\newtheorem{remark}[theorem]{Remark}
\numberwithin{equation}{subsection}
\definecolor{blue-violet}{rgb}{0.54, 0.17, 0.89}
\definecolor{Blue}{rgb}{0.01, 0.28, 1.0}
\definecolor{gGreen}{rgb}{0.2, 0.8, 0.2}
\definecolor{Green}{rgb}{0.04, 0.85, 0.32}
\def\@tocline#1#2#3#4#5#6#7{\relax
  \ifnum #1>\c@tocdepth 
  \else
    \par \addpenalty\@secpenalty\addvspace{#2}%
    \begingroup \hyphenpenalty\@M
    \@ifempty{#4}{%
      \@tempdima\csname r@tocindent\number#1\endcsname\relax
    }{%
      \@tempdima#4\relax
    }%
    \parindent\z@ \leftskip#3\relax \advance\leftskip\@tempdima\relax
    \rightskip\@pnumwidth plus4em \parfillskip-\@pnumwidth
    #5\leavevmode\hskip-\@tempdima
      \ifcase #1
       \or\or \hskip 1.9em \or \hskip 2em \else \hskip 3em \fi%
      #6\nobreak\relax
    \dotfill\hbox to\@pnumwidth{\@tocpagenum{#7}}\par
    \nobreak
    \endgroup
  \fi}
\newcommand{\NN}{\mathbb{N}}
\newcommand{\RR}{\mathbb{R}}
\newcommand{\CC}{\mathbb{C}}
\newcommand{\cO}{{\mathcal O}}
\newcommand{\cJ}{{\mathcal J}}
\begin{document}

\title[Multiplier ideals of meromorphic functions]{Multiplier ideals of meromorphic functions in dimension two}

\author[M. Alberich-Carrami\~nana]{Maria Alberich-Carrami\~nana}
\address{Departament de Matem\`atiques  and  Institut de Matem\`atiques de la UPC-BarcelonaTech (IMTech)\\  Universitat Polit\`ecnica de Catalunya \\ Av.~Diagonal 647, Barcelona
08028; and Institut de Rob\`otica i Inform\`atica Industrial\\ CSIC-UPC \\ Llorens i Artigues 4-6, Barcelona 08028, Spain.} 
\email{Maria.Alberich@upc.edu}

\author[J. \`Alvarez Montaner]{Josep \`Alvarez Montaner}
\address{Departament de Matem\`atiques  and  Institut de Matem\`atiques de la UPC-BarcelonaTech (IMTech)\\  Universitat Polit\`ecnica de Catalunya \\ Av.~Diagonal 647, Barcelona
08028; and Centre de Recerca Matem\`atica (CRM), Bellaterra, Barcelona 08193.} 
\email{Josep.Alvarez@upc.edu}

\author[R. G\'omez]{Roger G\'omez-L\'opez}
\address{Departament de Matem\`atiques \\  Universitat Polit\`ecnica de Catalunya \\ Av.~Diagonal 647, Barcelona
08028}

\email{Roger.Gomez.Lopez@upc.edu}

\thanks{All three authors are partially supported by 
grant PID2019-103849GB-I00 funded by MICIU/AEI/
10.13039/501100011033
and AGAUR grant 2021 SGR 00603. JAM is also supported by Spanish State Research Agency, through the Severo Ochoa and Mar\'ia de Maeztu Program for Centers and Units of Excellence in R$\&$D (project CEX2020-001084-M). RGL gratefully acknowledges Secretaria d'Universitats i Recerca del Departament d’Empresa i Coneixement de la Generalitat de Catalunya and Fons Social Europeu Plus for the financial support of his FI Joan Oró predoctoral grant.}

\keywords {Multiplier ideals, jumping numbers, meromorphic functions.}

\subjclass[2000]{Primary 13D45, 13N10}

\begin{abstract}
We provide an effective method to compute multiplier ideals of meromorphic functions in dimension two. We also prove that meromorphic functions only have integer jumping numbers after reaching some threshold.

\end{abstract}

\maketitle

\section{Introduction}

Let $X$ be a $n$-dimensional complex  smooth algebraic variety and
$\cO_{X,O}$ the local ring of a point $O\in X$. Let  $f,g:(X,0) \longrightarrow (\CC,0)$ be two germs of holomorphic functions and consider the  germ of meromorphic function  $f/g: (X,0) \longrightarrow (\CC,0).$ Taking local coordinates we assume $f,g \in \cO_{X,O}= \CC\{ x_1,\dots , x_n\}$. Notice also that $f/g$ and $f'/g'$ define the same germ if $fg'=f'g$.

\vskip 2mm

In order to study the singularities of the meromorphic germ $f/g$, the authors in \cite{AGLN} introduced a theory of multiplier ideals mimicking the classical one for holomorphic functions (see also \cite{Take}). However this variant for meromorphic functions has several features that do not behave quite as in the classical way.  

\vskip 2mm

The aim of this note is to provide an effective method to compute multiplier ideals of meromorphic functions in dimension two. In Section \ref{Sec3} we present an algorithm to compute sequentially the set of jumping numbers and the corresponding multiplier ideals. This algorithm is a mild modification of the one given by the first two authors together with Dachs-Cadefau in \cite{AAD1} combined with the methods developed in \cite{AAB1, AAB2}. For completeness we will describe the differences between both cases.

\vskip 2mm

In Section \ref{Sec4} we pay attention to the version of Skoda's theorem for meromorphic functions considered in \cite{AGLN} which provides some interesting phenomena. For instance, we prove in Theorem \ref{thm:main} that after reaching some threshold the only jumping numbers of $f/g$ are integer numbers.

\vskip 2mm

In Section \ref{Sec5} we present some examples that illustrate the effectiveness of our methods and the properties proved in Theorem \ref{thm:main}. In particular, the examples show how the jumping numbers of $f/g$ depend on the contact between the two plane curves $f$ and $g$.

\section{Multiplier ideals of meromorphic functions} \label{Sec2}
Let $X$ be a $n$-dimensional complex  smooth algebraic variety and let  $U$ be a neighbourhood of $O\in X$. Let $\pi: Y \longrightarrow U$ be a log resolution of a meromorphic germ   $f/g: (X,0) \longrightarrow (\CC,0).$ Namely, $\pi$ is a proper birational map such that:

\begin{itemize}
\item $\pi$  is a log resolution of the hypersurface $H = \{f_{|_U} = 0\} \cup \{g_{|_U} = 0\}$, i.e., $\pi$ is an isomorphism outside a proper analytic subspace in $U$;

\item there is a 
normal crossing divisor $F$ on $Y$ such that $\pi^{-1}(H) = \mathcal{O}_Y(-F)$;

\item the lifting $\tilde{f} / \tilde{g} = (f/g)  \circ \pi = \frac{f \circ \pi}{g \circ \pi}$ defines a holomorphic map $\tilde{f} / \tilde{g} : Y \to \mathbb{P}^1$.

\end{itemize}

Let  $\{E_i\}_{i \in I}$ be the irreducible components of $F$. Some of the ingredients that we will use in the sequel are the {\it relative canonical divisor}, defined by the Jacobian  determinant of $\pi$, that we denote
$$K_\pi = \sum k_i E_i ,$$
and  {\it the total transforms} of $f$ and $g$  that we denote by
\[ \tilde{f}=\pi^* f= \sum N_{f, i} E_i, \quad \quad \tilde{g}= \pi^* g= \sum N_{g, i} E_i. \]
We point out that these divisors can be decomposed into their exceptional and affine parts, depending on whether the divisor $E_i$ has exceptional support or it corresponds to the components of the {\it strict transform}, which is the closure of $\pi^{-1}(C-\{O\})$ with $C$ being the hypersurface defined by $f=0$ or $g=0$ respectively.
We call $v_i(f):= N_{f, i}$ the \textit{value} of $f$ at $E_i$.

\vskip 2mm
Associated to the meromorphic germ $f/g$ we consider the  divisor $$\tilde{F} = \sum_{i \in I} N_{i} E_i, $$ with $  N_{i}:=N_{f,i}-N_{g,i}.$ 
We define
$$\tilde{F}_0=\sum_{N_{i} >0} N_{i}E_i,\quad \tilde{F}_\infty=\sum_{N_{i}<0} N_{i}E_i.$$

\begin{definition}\label{DefMultiplier}
Let $\pi: Y \longrightarrow U$ be a log resolution of a non-constant meromorphic germ   $f/g: (X,0) \longrightarrow (\CC,0).$
We define the $0$-multiplier ideal  of $f/g$ at $\lambda$ as the stalk at the origin of
\[\cJ \left( \left(\frac{f}{g}\right)^\lambda\right) =\pi_* \mathcal{O}_Y ( \lceil K_\pi - \lambda \cdot \tilde{F}_0 \rceil), \]
where $\lceil \cdot \rceil$ denotes the upper integer part of a real number or $\mathbb{R}$-divisor. If no confusion arises we denote the stalk at the origin in the same way and thus $\cJ((\frac{f}{g})^\lambda) \subseteq \cO_{X,O}.$
\end{definition}

A useful remark mentioned in \cite{AGLN} shows that we can describe the meromorphic multiplier ideal from the theory of mixed multiplier ideals $\cJ(f^{\lambda_1} g^{\lambda_2})$ associated to the pair of functions. Namely,
pick $t\in\NN$ such that $t\geq \lambda$.
Then,
$$
\cJ \left(\left( \frac{f}{g}\right)^\lambda\right)= \cJ\left(f^{\lambda} g^{t-\lambda}\right): g^{t} = \left\{ h \in \cO_{X,O} \;|\; hg^t \in \cJ\left(f^{\lambda} g^{t-\lambda}\right) \right\} .
$$

\begin{remark}
 Analogously, we define the $\infty$-multiplier ideal  of $f/g$ at $\lambda$ as the stalk at the origin of
\[\cJ^\infty \left( \left(\frac{f}{g}\right)^\lambda\right) =\pi_* \mathcal{O}_Y ( \lceil K_\pi + \lambda \cdot \tilde{F}_\infty \rceil) = \cJ \left( \left(\frac{g}{f}\right)^\lambda\right) . \] 
\end{remark}

Many of the properties of the classical multiplier ideals are still satisfied in the meromorphic case. For instance, the definition is independent of the chosen log resolution $\pi$ and there exists a discrete strictly increasing sequence of rational numbers
$\lambda_1 < \lambda_2 < \cdots$
such that 
\[\cJ\left(\left(\frac{f}{g}\right)^{\lambda_{i+1}}\right) \subsetneq \cJ\left(\left(\frac{f}{g}\right)^c\right) = \cJ\left(\left(\frac{f}{g}\right)^{\lambda_i}\right)\]
for $c \in [\lambda_i, \lambda_{i+1})$, and all $i$. These rational numbers are called the {\it $0$-jumping numbers} of the meromorphic function $f/g$. If no confusion may arise, we shall refer to them and to their corresponding $0$-multiplier ideals simply as jumping numbers and multiplier ideals.

\section{Explicit computation in dimension two} \label{Sec3}

The classical case of multiplier ideals of holomorphic functions has been extensively studied when the complex  smooth algebraic variety $X$ has dimension two. For simple complete ideals or irreducible plane curves, J\"arviletho \cite{Jar} and Naie \cite{Naie} provided a closed formula for the set of jumping numbers. For the case of any ideal or any plane curve we must refer to the work of Tucker \cite{Tuc} or the papers by the first two authors with Dachs-Cadefau and Alonso-Gonz\'alez \cite{AAD1, AADG1}. Hyry and J\"arviletho  \cite{HJ1,HJ2}  also worked on this general setting in dimension two.

\vskip 2mm

The method presented in \cite{AAD1} is an algorithm that computes sequentially at each step a jumping number and its associated multiplier ideal and thus it provides  the ordered sequence of multiplier ideals in any desired range of the real line. Combined with the effective methods developed by the first two authors with Blanco \cite{AAB1, AAB2} one may start the algorithm with the equation of a plane curve or generators of any ideal and get a set of  generators of each multiplier ideal as an output (see also \cite{BD}). In this section we will adjust  these methods to the case of meromorphic functions. We present first the algorithm and then we will explain each step with some detail highlighting the main differences with the classical case.

\begin{algorithm}[Jumping Numbers and Multiplier Ideals]\label{Alg: JN meromorphic}\hspace{45mm}

\vskip 1mm

\noindent {\tt Input:} A pair of holomorphic functions $f,g \in \mathcal{O}_{X,O}$.

\vskip 1mm
\noindent {\tt Output:} List of Jumping Numbers $\lambda_1 < \lambda_2 < \dots $ of $f/g$ and a set of generators of their corresponding multiplier ideals.

\vskip 2mm

\begin{itemize}[leftmargin=1.75cm]

\item[\textbf{(Step 1)}] Compute the minimal log resolution of $f/g$ and set $\tilde{F}_0= \sum_{N_{i} >0} N_{i}E_i$. 

 \vskip 2mm

 \item[\textbf{(Step 2)}]
Set $\lambda_0=0$, $e_i^{\lambda_0}=0$. From $j=1$, incrementing by $1$

\vskip 2mm
\begin{enumerate}
\setlength{\leftskip}{3cm}

\item[\textbf{(Step 2.$j$)}]

 \begin{enumerate}
  \item[$\cdot$] {\bf Jumping number}: Compute
    \vskip -2mm
    \[
      \lambda_j=\min_{N_i>0}\left\{\frac{k_i+1+e_i^{\lambda_{j-1}}}{N_i}\right\}.
    \]
    \vskip -1mm

  \item [$\cdot$] {\bf Multiplier ideal}: Compute the antinef closure $D_{\lambda_j}=\sum e_i^{\lambda_j}E_i$ of $\lfloor \lambda_j  \tilde{F}_0 - K_{\pi} \rfloor$ using the unloading procedure. $\cJ((\frac{f}{g})^{\lambda_j}) = \pi_*\mathcal{O}_Y(-D_{\lambda_j})$.

    \vskip 1mm

  \item [$\cdot$] {\bf Generators}: Give a system of generators of the complete ideal associated to $D_{\lambda_j}$. 
 \end{enumerate}

 \end{enumerate}

\end{itemize}
\end{algorithm}

\subsection{Computing the minimal log resolution of meromorphic functions}\label{ssec:log resolution}

Computing the log resolution of reduced plane curves is an already well-known procedure see, for instance \cite{CAbook}.
To obtain a log resolution of $f/g$, one can begin with a log resolution of the curve defined by $f \cdot g$ and then perform additional blow-ups to ensure a log resolution of the generic fibers of $f/g$, which is achieved by blowing up the base points of the ideal $(f,g)$.
These latter blow-ups also guarantee that each \textit{dicritical} component $E$ of $f/g$ has appeared, i.e. an exceptional divisor $E$ for which the restricted map $(\tilde{f} / \tilde{g})_{\vert_E} : Y \to \mathbb{P}^1$ is surjective (non-constant). 
Notice that if $E_i$ is a dicritical component then $N_{i}=0$.

In \cite{Alb}, an algorithmic procedure was described to compute the base points of $(f,g)$; see also \cite{AAB1}. To compute the log resolution of $f/g$, we may adapt Algorithm 4.6 from \cite{AAB1}, which is more convenient since it begins with the log resolution of $f \cdot g$, by omitting the last part (steps v to viii). The finiteness and correctness of this procedure are proved there.

\vskip 1mm

\begin{algorithm}[Minimal log resolution of $f/g$]\label{alg-log_resol}\hspace{45mm}

\vskip 1mm

\noindent {\tt Input:} A pair of holomorphic functions $f, g \in \mathbb{C}\{x,y\}$.

\vskip 1mm
\noindent {\tt Output:} Divisor $F= \sum_{i \in I} E_i$ of the minimal log resolution of $f/g$ described by means of the proximity matrix, and the values $(N_{f,i}, N_{g,i})_{i \in I}$ of $f$ and $g$.

\vskip 2mm

\begin{enumerate}
 \item Find $b=\gcd(f,g)$ and set $f=ba_1$, $g=ba_2$. Compute $h=ba_1a_2$.
 \item Find the exceptional divisor $F'= \sum_{i \in I'} E_i$ of the minimal log resolution of the reduced germ $\xi_{\textrm{red}}$, where $\xi: h=0$, and the values $v_{i} (a_k)$ and $v_{i} (b)$ at each $E_i$.
\\ 
 \noindent Compute $v_i=\min_i \{ v_{i} (a_1), v_{i} (a_2) \}$ for $i \in I'$.

 \item Define $F''= \sum_{i \in I''} E_i \geq F'$ by adding, if necessary, exceptional components of blow-ups of free points using the following criterion 
 and set, for each new $E_i$, $v_i=\min \{ v_{i} (a_1), v_{i} (a_2) \}$.

 {\textit{\textbf{Repeat}}} for $k \in \{1,2\}$:

\begin{enumerate}
 \item [$\cdot$] Blow-up the free point $q \in E_i$ on the strict transform of $a_k$,
\end{enumerate}

{\textit{\textbf{while}}} $v_i(a_k)=v_i$.
 
 \item  Define $F= \sum_{i \in I} E_i \geq F''$ by adding, if necessary, exceptional components of blow-ups of satellite points using the following criterion 
 and set, for each new $E_i$, $v_i=\min \{ v_{i} (a_1), v_{i} (a_2) \}$.
 
 {\textit{\textbf{Repeat}}}: 

\begin{enumerate}
 \item [$\cdot$] Blow-up the satellite point $q \in E_i \cap E_j$,
\end{enumerate}

{\textit{\textbf{while}}} $v_i(a_1)>v_i$ and $v_j(a_2)>v_j$.

\item Return $F$ and $\left(N_{f,i} = v_i(b) + v_i(a_1), N_{g,i}= v_i(b) + v_i(a_2)\right)_{i \in I}$.

\end{enumerate}


\end{algorithm}

\vskip 1mm

Observe that the strategy of performing further blow-ups from the log resolution of $f \cdot g$ 
until the irreducible components of the strict transform in $f$
and $g$ are separated by a dicritical component is rather vague. In contrast, Algorithm \ref{alg-log_resol} determines precisely the minimal blow-ups needed to achieve that.

\subsection{Computing the integral closure of an ideal}\label{ssec:unloading}

Let $\pi: Y \longrightarrow X$ be a proper birational morphism obtained after a sequence of point blowings-up.   An effective divisor with integral coefficients $D \in {\rm Div}(Y)$ is called \emph{antinef} if $-D\cdot E_i \geqslant 0$,  for every exceptional prime divisor $E_i$. Lipman \cite{Lip} established a one to one correspondence  between antinef divisors in ${\rm Div}(Y)$ and complete ideals in $\mathcal{O}_{X, O}$. Namely, given an effective divisor $D =  \sum d_i E_i \in \textrm{Div}_{\mathbb{Q}}(Y)$, we may consider its associated sheaf ideal $\pi_* \mathcal{O}_{X'}(-D)$ whose stalk at $O$ is
\begin{equation*} \label{eq:complete-ideal}
  H_D = \{h \in \mathcal{O}_{X, O}\ |\ v_i(h) \geqslant \lceil d_i \rceil\ \textrm{for all}\ E_i \leqslant D\} ,
\end{equation*}
where $v_i(h)$ is the value of  $h$ at $E_i$. These ideals are \emph{complete}, see \cite{Zar}, and $\mathfrak{m}$-primary whenever $D$ has exceptional support.

\vskip 1mm


The divisors 
$\lfloor \lambda_j  \tilde{F}_0 - K_{\pi} \rfloor$ used in the definition of multiplier ideals  are not antinef. Given a non-antinef divisor $D$, one can compute an antinef divisor defining the same ideal, called the antinef closure, via the so called unloading procedure (see \cite[\S 2.2]{AAD1} or  \cite[\S 4.6]{CAbook}). 
The unloading procedure incrementally expands the exceptional part $D_{\rm exc}$ of $D$ to obtain the minimal antinef divisor $D'$ containing $D$.
This is one of the key ingredients of the algorithm in \cite{AAD1}. The finiteness and correctness of the unloading procedure  it is proved there. 

\vskip 1mm

\begin{algorithm}[Unloading procedure]\hspace{45mm}

\vskip 1mm

\noindent {\tt Input:} A divisor $D$.

\vskip 1mm
\noindent {\tt Output:} The divisor $D'$ that is the antinef closure of $D$.

\vskip 2mm

{\textit{\textbf{Repeat}}}

\begin{enumerate}
 \item [$\cdot$] Define $\Theta:= \{E_i \leqslant D_{\rm exc} \hskip 2mm | \hskip 2mm  \rho_i= -\lceil D \rceil \cdot E_i <0 \}.$
 \item [$\cdot$] Let $n_i = \left \lceil \frac {\rho_i}{E_i^2} \right \rceil$ for $i\in\Theta$. Notice that $(\lceil D\rceil+n_i E_i)\cdot E_i\leqslant 0$.
 \item [$\cdot$] Define a new divisor as $D'= \lceil D \rceil + \sum_{E_i \in \Theta} n_i E_i$.
\end{enumerate}

{\textit{\textbf{Until}}} the resulting divisor $D'$ is antinef.

\end{algorithm}

\vskip 1mm

%

\subsection{Computing the generators of an ideal associated to an antinef divisor}\label{ssec:gen}

Once we have the antinef divisor that describes the corresponding multiplier ideal of the meromorphic function we may use the algorithm developed in \cite{AAB2} to compute a set of generators of the multiplier ideal. This step of our method does not require any adjustment for the case of meromorphic functions and we briefly recall it here.

\vskip 2mm

First, start with a divisor $D$, which we assume to be antinef. The divisor $D$ is decomposed into simple divisors $D_1, \dots, D_r$ by using Zariski's decomposition theorem \cite{Zar, CAbook}. This decomposition is precisely
\begin{equation*} \label{eq:divisor-decomp}
D = \sum \rho_i D_i, \quad \textrm{where} \quad \rho_i = -D\cdot E_i \geqslant 0,
\end{equation*}
and $H_{D_i}$ is a simple ideal appearing in the factorization of $H_D$ with multiplicity $\rho_i$. 

\vskip 2mm

Now, for each simple divisor $D_i$, compute $D'_i$ an antinef divisor defining an adjacent ideal $H_{D'_i} \subset H_{D_i}$, such that $D'_i$ is the antinef closure of $D_i + E_{O}$ ($E_{O}$ is the exceptional divisor obtained blowing-up the origin $O$). Next, find an element $f \in \mathcal{O}_{X, O}$ belonging to $H_{D_i}$ but not to $H_{D'_i}$. Now, $D'_i$ is no longer simple but has smaller support than $D_i$. This part is repeated with $D := D'_i$ until $H_D$ is the maximal ideal $\mathfrak{m} \subseteq \mathcal{O}_{X, O}$.

\vskip 1mm

This first part of the algorithm generates a tree where each vertex is an antinef divisor and where the leafs of the tree are all $\mathfrak{m}$. The second part traverses the tree bottom-up computing in each node the ideal associated to the divisor. Using the notations from the above paragraph, given any node in the tree with divisor $D$, the ideal $H_D$ is computed multiplying the ideals in child nodes $H_{D'_1} \cdots H_{D'_r}$ and adding the element $f$ to the resulting generators. 

\section{Skoda's theorem version for meromorphic functions} \label{Sec4}

Let $f \in \cO_{X,O}= \CC\{ x_1,\dots , x_n\}$ be an holomorphic function and consider the classical multiplier ideals $\cJ(f^\lambda)$. A version of Skoda's theorem for hypersurfaces  \cite[\S 9]{lazarsfeld2004positivity} states that 
$$
\cJ \left( {f}^{\lambda+\ell}\right)
=
{f^\ell} \cJ \left(f^\lambda\right)
$$
for every $\ell\in\NN.$ This implies a peridiocity of the set of jumping numbers, indeed   any jumping number is of the form $\lambda +\ell$ with $\lambda$ being a jumping number in the interval $[0,1]$ and $\ell\in\NN.$

\vskip 2mm

For the case of meromorphic functions we only have a weaker version of Skoda's theorem.

\begin{proposition}\cite[Proposition 6.5]{AGLN}\label{PropSkoda}
Let $f,g\in \cO_{X,O}$ be a nonzero elements and $\lambda\in \RR_{\geq 0}$.
Then,
$$
\cJ \left( \left( \frac{f}{g}\right)^{\lambda+\ell}\right)
=
\left( 
\frac{f^\ell}{g^\ell} \; \cJ \left( \left( \frac{f}{g}\right)^\lambda\right)\right)
\cap \cO_{X,O}
= 
f^\ell \left( \cJ \left( \left(\frac{f}{g}\right)^\lambda\right): g^{\ell} \right)
$$
for every $\ell\in\NN.$
In particular,
if  $\lambda+1$ is a jumping number, then $\lambda$ is a jumping number.
\end{proposition}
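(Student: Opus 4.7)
The natural strategy is to reduce the claim to an identity about mixed multiplier ideals, where the classical hypersurface Skoda theorem extends directly. The key bridge is the representation recalled in the remark before Definition~\ref{DefMultiplier}: for any integer $t\geq\lambda$,
\[
\cJ\bigl((f/g)^{\lambda}\bigr)=\cJ(f^{\lambda}g^{t-\lambda}):g^{t}.
\]

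The first step is a mixed version of Skoda. On a common log resolution $\pi$ of $fg$, the pull-back $\pi^{*}f$ has integer coefficients, hence for any $\mu\geq 0$
\[
\lceil K_{\pi}-(\lambda+\ell)\pi^{*}f-\mu\,\pi^{*}g\rceil=\lceil K_{\pi}-\lambda\,\pi^{*}f-\mu\,\pi^{*}g\rceil-\ell\,\pi^{*}f,
\]
and pushing forward via the projection formula yields $\cJ(f^{\lambda+\ell}g^{\mu})=f^{\ell}\cJ(f^{\lambda}g^{\mu})$.

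Combining these, pick $t\in\NN$ with $t\geq\lambda+\ell$. The bridge followed by mixed Skoda gives
\[
\cJ\bigl((f/g)^{\lambda+\ell}\bigr)=\cJ(f^{\lambda+\ell}g^{t-\lambda-\ell}):g^{t}=\bigl(f^{\ell}\,\cJ(f^{\lambda}g^{t-\lambda-\ell})\bigr):g^{t},
\]
while applying the bridge with exponent $\lambda$ (using $t-\ell\geq\lambda$) and iterating colons yields
\[
\cJ\bigl((f/g)^{\lambda}\bigr):g^{\ell}=\cJ(f^{\lambda}g^{t-\lambda-\ell}):g^{t}.
\]
The middle expression in the statement unwinds as $(f^{\ell}\cJ((f/g)^{\lambda})):g^{\ell}$, since $h\in(f^{\ell}/g^{\ell})\cJ((f/g)^{\lambda})\cap\cO_{X,O}$ iff $hg^{\ell}=f^{\ell}k$ for some $k\in\cJ((f/g)^{\lambda})$; substituting the bridge identifies this ideal with $\bigl(f^{\ell}\cJ(f^{\lambda}g^{t-\lambda-\ell})\bigr):g^{t}$, matching the expression for $\cJ((f/g)^{\lambda+\ell})$. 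The final equality with $f^{\ell}\bigl(\cJ((f/g)^{\lambda}):g^{\ell}\bigr)$ is the algebraic identity $(f^{\ell}I):g^{N}=f^{\ell}(I:g^{N})$, valid in the UFD $\cO_{X,O}$ under $\gcd(f,g)=1$; one arranges this WLOG by taking coprime representatives of the meromorphic germ.

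For the ``in particular'' assertion, if $\lambda$ were not a jumping number one could pick $\varepsilon>0$ small so that $\cJ((f/g)^{\lambda-\varepsilon})=\cJ((f/g)^{\lambda})$; then the main identity with $\ell=1$ gives $\cJ((f/g)^{\lambda+1-\varepsilon})=f(\cJ((f/g)^{\lambda-\varepsilon}):g)=f(\cJ((f/g)^{\lambda}):g)=\cJ((f/g)^{\lambda+1})$, contradicting that $\lambda+1$ is a jumping number. The main obstacle I anticipate is the coprimality reduction: one has to check it is consistent with both sides of the stated identity, since the left-hand multiplier ideal depends only on the class $f/g$, whereas the right-hand colon expression depends on the chosen representatives; the statement must therefore be read with coprime $f,g$.
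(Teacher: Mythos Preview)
The paper does not give its own proof of this proposition: it is quoted verbatim from \cite[Proposition~6.5]{AGLN} and used as a black box, so there is no in-paper argument to compare against.

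That said, your argument is sound and is the natural one. The mixed-Skoda step $\cJ(f^{\lambda+\ell}g^{\mu})=f^{\ell}\cJ(f^{\lambda}g^{\mu})$ follows exactly as you wrote, and the bridge $\cJ((f/g)^{\lambda})=\cJ(f^{\lambda}g^{t-\lambda}):g^{t}$ together with the elementary identity $(I:g^{a}):g^{b}=I:g^{a+b}$ gives both displayed equalities. The contrapositive for the ``in particular'' clause is correct.

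Your concern about coprimality is not just an obstacle you anticipate but a genuine issue with the \emph{statement} as written. The first two members of the displayed chain depend only on the meromorphic germ $f/g$, but the third member $f^{\ell}\bigl(\cJ((f/g)^{\lambda}):g^{\ell}\bigr)$ depends on the chosen representatives: for instance with $f=xy$, $g=x$, $\lambda=0$, $\ell=1$ one gets $(y)$ on the left and $(xy)$ on the right. So the third equality is only valid after passing to coprime representatives, exactly as you say; the identity $(f^{\ell}I):g^{N}=f^{\ell}(I:g^{N})$ you invoke needs $\gcd(f,g)=1$ in the UFD $\cO_{X,O}$. This is how the result is applied later in the paper (Theorem~\ref{thm:main} and the examples all assume $f,g$ have no common factor), so your reading is the intended one.
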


We also have the following property.

\begin{lemma} \cite[Lemma 7.5]{AGLN} \label{Lem: JN_integers} For any $\lambda \in \RR_{> 0}$ we have $\cJ(f^\lambda) \subseteq \cJ((\frac{f}{g})^\lambda)$. In addition, if $f$ and $g$ have no common factors, it holds $\cJ((\frac{f}{g})^n)=(f^n)$ for any  $n \in \mathbb{Z}_{>0}$.
\end{lemma}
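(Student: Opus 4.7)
The plan for the first inclusion $\cJ(f^\lambda)\subseteq\cJ((f/g)^\lambda)$ is to fix a common log resolution $\pi\colon Y\to U$ of $f/g$ (which is automatically a log resolution of $f$) and directly compare the divisors that define the two ideals. Writing $\tilde f=\sum N_{f,i}E_i$ with all $N_{f,i}\geq 0$, and $\tilde F_0=\sum_{N_i>0}(N_{f,i}-N_{g,i})E_i$, the coefficient of each $E_i$ in $\tilde F_0$ is either $N_{f,i}-N_{g,i}\leq N_{f,i}$ (when $N_i>0$) or $0$. In either case it is bounded above by $N_{f,i}$, so $\tilde f\geq\tilde F_0$ as effective divisors and hence $K_\pi-\lambda\tilde f\leq K_\pi-\lambda\tilde F_0$. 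Passing to round-ups and then to $\pi_*$ yields the desired inclusion of sheaves, and in particular of stalks at $O$.

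For the equality $\cJ((f/g)^n)=(f^n)$, my plan is to combine the remark in Section~\ref{Sec2} expressing $\cJ((f/g)^\lambda)$ as a mixed-multiplier colon, namely $\cJ((f/g)^\lambda)=\cJ(f^\lambda g^{t-\lambda}):g^t$ for integers $t\geq\lambda$, with the classical Skoda theorem recalled at the start of Section~\ref{Sec4}. Taking $\lambda=t=n$ collapses the $g$-exponent to zero, and directly from its log-resolution definition $\cJ(f^n g^0)=\cJ(f^n)$, so $\cJ((f/g)^n)=\cJ(f^n):g^n$. Applying the Skoda identity $\cJ(f^{\lambda+\ell})=f^\ell\cJ(f^\lambda)$ at $\lambda=0$, $\ell=n$ then yields $\cJ(f^n)=f^n\cJ(f^0)=(f^n)$, using the standard fact that the trivial multiplier ideal equals $\cO_{X,O}$, i.e.\ $\pi_*\cO_Y(K_\pi)=\cO_{X,O}$, since $K_\pi$ is effective with exceptional support on the smooth variety $X$.

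What remains is to compute the colon $(f^n):g^n$ inside $\cO_{X,O}$. Because $O$ is a smooth point, this local ring is regular and therefore a UFD; the hypothesis that $f$ and $g$ share no common factor upgrades to coprimality of $f^n$ and $g^n$, so any $h$ with $f^n\mid hg^n$ must satisfy $f^n\mid h$. This gives $(f^n):g^n=(f^n)$, completing the argument. The only mildly delicate piece is the identification $\cJ(f^0)=\cO_{X,O}$ needed to close the Skoda step; once that is granted, everything else reduces to divisor bookkeeping on $\pi$ together with elementary UFD manipulations.
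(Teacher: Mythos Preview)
The paper does not supply its own proof of this lemma; it is quoted verbatim from \cite[Lemma~7.5]{AGLN} and used as a black box in the proof of Theorem~\ref{thm:main}. So there is no in-paper argument to compare your proposal against.

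That said, your argument is correct. For the inclusion, the coefficient comparison $\tilde F_0\leq\tilde f$ is exactly the right observation, and the passage to round-ups and pushforwards is standard. For the equality, your route through the colon description $\cJ((f/g)^n)=\cJ(f^n g^0):g^n=\cJ(f^n):g^n$, followed by $\cJ(f^n)=(f^n)$ and the UFD computation of $(f^n):g^n=(f^n)$, is clean and complete. One minor stylistic remark: invoking the hypersurface Skoda identity at $\lambda=0$ is slightly unconventional; a more direct route to $\cJ(f^n)=(f^n)$ is to note that $n\tilde f$ is an integral Cartier divisor pulled back from $X$, so $\pi_*\cO_Y(K_\pi-n\tilde f)=f^n\cdot\pi_*\cO_Y(K_\pi)=(f^n)$ by the projection formula and the identity $\pi_*\cO_Y(K_\pi)=\cO_{X,O}$ you already invoke. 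But this is essentially a repackaging of what you wrote, not a correction.
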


Additionally,

\begin{lemma}\label{Lem: JN_all_integers}
    Let $f,g\in \CC\{ x, y\}$. If $f$ vanishes at the origin and has no common factor with $g$, then $\mathbb{Z}_{>0}$ are jumping numbers of the meromorphic function $f/g$.
\end{lemma}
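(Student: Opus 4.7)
Plan: The plan is to reduce, via the Skoda-type identity of Proposition \ref{PropSkoda}, the statement that every positive integer is a jumping number of $f/g$ to the single base case $n=1$, and then to handle $n=1$ directly from the log resolution data. By Lemma \ref{Lem: JN_integers} we already know $\cJ((f/g)^n) = (f^n)$ for every $n \in \ZZ_{>0}$, so the task is to exhibit, for each such $n$, an element lying in $\cJ((f/g)^{n-\epsilon})$ but not in $(f^n)$ for all small $\epsilon > 0$.

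For the base case, I would single out the affine component $E_k$ of the minimal log resolution of $f/g$ coming from the strict transform of any irreducible factor $f_k$ of $f$ (such a factor exists because $f \in \mathfrak{m}$). Since $\gcd(f,g)=1$, this component satisfies $N_{f,E_k}=a_k \geq 1$, $N_{g,E_k}=0$, hence $N_{E_k}=a_k>0$, and $k_{E_k}=0$ because $E_k$ is affine. For small $\epsilon > 0$ the condition at $E_k$ defining membership in $\cJ((f/g)^{1-\epsilon})$ reads $v_{E_k}(h) \geq \lfloor (1-\epsilon) a_k\rfloor = a_k - 1$, whereas the condition coming from $\cJ((f/g)^1) = (f)$ is $v_{E_k}(h) \geq a_k$. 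Since the unloading procedure only modifies exceptional coefficients, the antinef closure of $\lfloor (1-\epsilon)\tilde{F}_0 - K_\pi\rfloor$ has coefficient $a_k-1$ on $E_k$, strictly less than the coefficient $a_k$ carried by $\tilde{f}$, the antinef divisor attached to $(f)$. Lipman's bijection between antinef divisors and complete ideals then forces $(f) \subsetneq \cJ((f/g)^{1-\epsilon})$, so $1$ is a jumping number.

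For the general $n$, I would apply Proposition \ref{PropSkoda} with $\lambda=1-\epsilon$ and $\ell=n-1$ to obtain
\[
\cJ\left(\left(\frac{f}{g}\right)^{n-\epsilon}\right) = f^{n-1}\left(\cJ\left(\left(\frac{f}{g}\right)^{1-\epsilon}\right) : g^{n-1}\right).
\]
Picking $h \in \cJ((f/g)^{1-\epsilon}) \setminus (f)$ from the base case, $h$ automatically lies in $\cJ((f/g)^{1-\epsilon}) : g^{n-1}$, while the UFD property of $\CC\{x,y\}$ together with $\gcd(f,g)=1$ forces $(f) : g^{n-1} = (f)$, so $h \notin (f) : g^{n-1}$. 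Multiplying through by the non-zero-divisor $f^{n-1}$ shows $f^{n-1}h \in \cJ((f/g)^{n-\epsilon}) \setminus (f^n)$, whence $n$ is a jumping number.

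The main obstacle I anticipate is making the Lipman-correspondence step of the base case fully rigorous: one must verify that the affine coefficient survives the unloading algorithm (clear from the fact that unloading only adds multiples of exceptional divisors) and that $\tilde{f}$ is truly the antinef divisor attached to $(f)$, which follows from $\pi_*E_i = 0$ and hence $\tilde{f} \cdot E_i = 0$ on each exceptional $E_i$. Once this is settled, the remainder is essentially a formal manipulation of the Skoda identity.
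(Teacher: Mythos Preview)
Your proof is correct, but it takes a somewhat different route from the paper.  The paper's argument handles every $n$ in one stroke, directly through Algorithm~\ref{Alg: JN meromorphic}: for any $\lambda_j<n$ and any affine component $E_i$ of $f$ one has $e_i^{\lambda_j}=\lfloor \lambda_j N_i\rfloor< nN_i=e_i^{n}$ (unloading never alters affine coefficients), so $D_{\lambda_j}\neq D_n$ and hence $n$ is a jumping number.  Your approach instead splits off a base case $n=1$, where you make essentially this same affine-coefficient observation, and then propagates to general $n$ via the Skoda-type identity of Proposition~\ref{PropSkoda}, producing an explicit witness $f^{n-1}h\in\cJ((f/g)^{n-\epsilon})\setminus (f^n)$.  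The Skoda reduction is logically sound but unnecessary: the affine-coefficient comparison already works uniformly in $n$, which is why the paper's proof is shorter.  What your detour buys is an explicit element exhibiting the jump, and a clean separation of the combinatorial input (affine coefficients survive unloading) from the algebraic input (colon ideals and the UFD property); the paper's version is more economical but relies more heavily on the reader tracking the algorithm.
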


\begin{proof}
    Let $E_i$ be an affine component of $f$. Then $N_i>0$, $k_i=0$, so $\lfloor \lambda_j N_i - k_i \rfloor = \lfloor\lambda_j \rfloor N_i = e_i^{\lambda_j}$, where the last equality is due to the unloading procedure not modifying the multiplicity of affine components. Following Algorithm \ref{Alg: JN meromorphic}, if $\lambda_j<n\in\mathbb{Z}_{>0}$ then $e_i^{\lambda_j} = \lfloor\lambda_j \rfloor N_i \neq n N_i = e_i^{n} \implies D_{\lambda_j}\neq D_n$. Therefore, $n$ is a jumping number.
\end{proof}

These results lead to some behaviours of the set of jumping numbers that contrast with the classical case. For the rest of this section we will only consider the case of dimension two, that is $\cO_{X,O}= \CC\{ x, y\}$, and we will use Algorithm \ref{Alg: JN meromorphic} to compute 
consecutive jumping numbers. In the sequel, we fix the following notation: 

\vskip 2mm
Let $\pi: Y \longrightarrow U$ be a log resolution of the meromorphic function  $f/g$ and consider the total transforms of $f$ and $g$.
\[ \tilde{f}=\pi^* f= \sum N_{f, i} E_i, \quad \quad \tilde{g}= \pi^* g= \sum N_{g, i} E_i. \]
Consider also the relative canonical divisor $K_\pi = \sum k_i E_i$.

\begin{theorem} \label{thm:main}
Let $f,g\in \CC\{ x, y\}$ be reduced holomorphic functions vanishing at the origin with no common factor. Then, there exist $n \in \mathbb{Z}_{>0}$ such that the only jumping numbers of the meromorphic function $f/g$ larger than $n$ are the integer numbers.
\end{theorem}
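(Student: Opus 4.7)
The plan is to combine the meromorphic Skoda formula of Proposition \ref{PropSkoda} with a direct valuation analysis on the log resolution, exploiting that $g$ vanishes at $O$. Writing any $\lambda>0$ as $\lambda=n+\mu$ with $n\in\NN$ and $\mu\in[0,1)$, Proposition \ref{PropSkoda} gives
\[
\cJ\left(\left(\tfrac{f}{g}\right)^{n+\mu}\right)=f^n\bigl(\cJ\left(\left(\tfrac{f}{g}\right)^\mu\right):g^n\bigr).
\]
Producing an $n_0\in\ZZ_{>0}$ such that $\bigl(\cJ((f/g)^\mu):g^{n_0}\bigr)=\cO_{X,O}$ for every $\mu\in[0,1)$ would already suffice: by monotonicity of colon ideals in the exponent of $g$, the same equality holds for every $n\geq n_0$, hence $\cJ((f/g)^{n+\mu})=(f^n)$ is constant on $[n,n+1)$. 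Combined with the fact that the positive integers are jumping numbers (Lemma \ref{Lem: JN_all_integers}), this forces the jumping numbers larger than $n_0$ to be precisely $n_0+1,n_0+2,\ldots$.

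The condition $\bigl(\cJ((f/g)^\mu):g^{n_0}\bigr)=\cO_{X,O}$ is equivalent to $g^{n_0}\in\cJ((f/g)^\mu)$, and since $\cJ((f/g)^\mu)$ takes only finitely many values for $\mu\in[0,1)$ it is enough to arrange this for $\mu$ arbitrarily close to $1$. Translating to the log resolution via Definition \ref{DefMultiplier}, it becomes the valuative inequalities
\[
n_0\,N_{g,j}\;\geq\;\lfloor\mu N_j^{(0)}-k_j\rfloor\quad\text{for every }j\in I,
\]
where $N_j^{(0)}:=\max(N_j,0)$. For $E_j$ in the strict transform of $f$ one has $(N_{f,j},N_{g,j},k_j)=(1,0,0)$, so the right-hand side equals $\lfloor\mu\rfloor=0$; for $E_j$ in the strict transform of $g$, or exceptional with $N_j\leq 0$, one has $N_j^{(0)}=0$ and the right-hand side is $-k_j\leq 0$. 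Only exceptional components with $N_j>0$ remain.

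The key input is the uniform lower bound $N_{g,j}\geq 1$ for every exceptional $E_j$ of $\pi$: since $\pi$ is a log resolution of a germ at $O$, each such $E_j$ is contracted to $O$, so the divisorial valuation $v_{E_j}$ is positive on the maximal ideal $\m_{X,O}$, and $N_{g,j}=v_{E_j}(g)\geq 1$ because $g\in\m_{X,O}$. Hence, for each of the finitely many exceptional components with $N_j>0$ the required inequality is satisfied as soon as $n_0\geq N_j-k_j-1$, and taking $n_0$ to be the maximum of these quantities (or $1$ if this maximum is non-positive) provides a valid threshold. I expect the only substantive point to be the valuative bound $N_{g,j}\geq 1$ on exceptional components; the rest is mechanical bookkeeping with Definition \ref{DefMultiplier}.
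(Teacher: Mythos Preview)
Your argument is correct. The valuative translation of $g^{n_0}\in\cJ((f/g)^\mu)$ via Definition~\ref{DefMultiplier} is exactly right, the case split is complete, and the bound $n_0\geq\max_{E_j\text{ exc.},\,N_j>0}(N_j-k_j-1)$ does the job uniformly in $\mu\in[0,1)$.

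The route, however, differs from the paper's. The paper argues through Algorithm~\ref{Alg: JN meromorphic}: starting from an integer jumping number $\lambda_{j-1}=n$, it uses Lemma~\ref{Lem: JN_integers} to identify the antinef divisor $D_n=n\tilde f$ (so $e_i^{\lambda_{j-1}}=nN_{f,i}$), then computes the next candidate $\lambda_j=\min_{N_i>0}(k_i+1+nN_{f,i})/N_i$ and shows it equals $n+1$ precisely when $N_{f,i}\leq (n+1)N_{g,i}+k_i+1$ for all relevant $i$. You instead bypass the algorithm entirely: Skoda (Proposition~\ref{PropSkoda}) reduces the question to $g^{n_0}\in\cJ((f/g)^\mu)$, which you verify directly on the resolution. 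Both proofs rest on the same key input, namely $N_{g,j}\geq 1$ on every exceptional component because $g\in\m_{X,O}$. Your approach is a bit more self-contained (it needs neither the algorithmic step nor Lemma~\ref{Lem: JN_integers} as input; indeed it recovers $\cJ((f/g)^n)=(f^n)$ as the case $\mu=0$), while the paper's approach meshes with its computational framework and immediately delivers the explicit threshold criterion recorded as the corollary.
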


\begin{proof}
Since $f$ is reduced, for any affine component $E_i$ of $f$ we have $N_{f,i} = 1$. Moreover, since $f$ and $g$ do not have common factor, for these affine components we have $N_{g,i} = 0$ and thus $N_i=N_{f,i}-N_{g,i}=1$. The relative canonical divisor only has exceptional support so we also have $k_i=0$ on affine components.

\vskip 2mm

Now consider an integer jumping number $\lambda_{j-1}= n \in \mathbb{Z}_{>0}$ and recall that,
by Lemma \ref{Lem: JN_integers},
$$\cJ \left(\left(\frac{f}{g}\right)^n\right)=\left(f^n \right)= \pi_* \mathcal{O}_Y(- n \tilde{f}), $$
where $n \tilde{f} = \sum n N_{f, i}\, E_i$ is an antinef divisor. Then, following Algorithm \ref{Alg: JN meromorphic}, the next jumping number is




\[
    \lambda_j =  \min_{N_i>0}\left\{\frac{k_i+1+n N_{f, i}}{N_i} \right\}.
\]

We have that this next jumping number is $n+1$ if and only if for all $i$ such that $N_i>0$ we have: 
\begin{align*}
    n+1 \leqslant \frac{k_i+1+n N_{f, i}}{N_i}
    &\iff (n+1) (N_{f,i}- N_{g,i}) \leqslant k_i + 1 + n N_{f, i}
    \\&\iff N_{f,i} \leqslant (n+1) N_{g,i} + k_i + 1 .
\end{align*}
For the affine components of $f$ we have $N_{f,i}=1$ (and $N_{g,i}=0$, $k_i=0$) so the inequality is satisfied. We don't consider the affine components of $g$ since $N_i<0$. Since $g$ vanishes at the origin, for all exceptional components we have $N_{g,i}\geqslant 1$. Therefore, if $n$ is large enough, the inequality holds for all $i$ such that $N_i>0$, and thus $\lambda_j=\lambda_{j-1}+1= n+1$.

\vskip 2mm

Notice that, given a large enough integer jumping number $\lambda_{j-1}=n \in \mathbb{Z}_{>0}$ (which exists by Lemma \ref{Lem: JN_all_integers}), all the consecutive jumping numbers satisfy $\lambda_i = \lambda_{i-1} + 1 \in \mathbb{Z}_{>0}$ for all $i\geqslant j$.
\end{proof}

Using the notations we fixed in this section we also obtain the following:

\begin{corollary}
Let $f,g\in \CC\{ x, y\}$ be reduced holomorphic functions vanishing at the origin, with no common factor. Then, the jumping numbers of $f/g$ are precisely the set of integers $\mathbb{Z}_{>0}$ if and only if $N_{f,i} \leqslant N_{g,i} + k_i + 1$ for all $i$.
\end{corollary}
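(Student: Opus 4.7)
The plan is to read the corollary as the analysis of the very first step of Algorithm \ref{Alg: JN meromorphic}: by Lemma \ref{Lem: JN_all_integers} the set $\mathbb{Z}_{>0}$ already consists of jumping numbers under the stated hypotheses, so what needs to be proved is that there are no others if and only if the inequality holds. Both directions will follow from the same explicit formula used in the proof of Theorem \ref{thm:main}.

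For $(\Leftarrow)$, assume $N_{f,i}\leqslant N_{g,i}+k_i+1$ for every $i$. Starting from $\lambda_0=0$ and $e_i^{\lambda_0}=0$, Algorithm \ref{Alg: JN meromorphic} produces $\lambda_1=\min_{N_i>0}\frac{k_i+1}{N_i}$; rewriting the hypothesis as $N_i=N_{f,i}-N_{g,i}\leqslant k_i+1$ gives $\lambda_1\geqslant 1$, and Lemma \ref{Lem: JN_all_integers} then forces $\lambda_1=1$. For the inductive step from $\lambda_{j-1}=n\in\mathbb{Z}_{>0}$, the computation in Theorem \ref{thm:main} shows that the next jumping number equals $n+1$ provided $N_{f,i}\leqslant (n+1)N_{g,i}+k_i+1$ for every $i$ with $N_i>0$; since $N_{g,i}\geqslant 0$, the case $n=0$ of the hypothesis implies this inequality for all $n\geqslant 1$. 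By induction the jumping numbers are exactly $\mathbb{Z}_{>0}$.

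For $(\Rightarrow)$, assume conversely that the jumping numbers are exactly $\mathbb{Z}_{>0}$. Then $\lambda_1=1$, so the formula $\lambda_1=\min_{N_i>0}\frac{k_i+1}{N_i}$ forces $\frac{k_i+1}{N_i}\geqslant 1$ for all $i$ with $N_i>0$, i.e.\ $N_{f,i}\leqslant N_{g,i}+k_i+1$ on those divisors. For indices with $N_i\leqslant 0$ the inequality is automatic: $N_{f,i}\leqslant N_{g,i}\leqslant N_{g,i}+k_i+1$ because $k_i\geqslant 0$. Hence the inequality holds for every $i$.

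The main (and only) subtlety is bookkeeping: one has to notice that the single inequality obtained from the transition $\lambda_0\to\lambda_1$ already controls all higher integer steps (thanks to the non-negativity of $N_{g,i}$), and to verify separately that components with $N_i\leqslant 0$ pose no obstruction. No additional geometric ingredient is required beyond Lemma \ref{Lem: JN_all_integers} and the inductive mechanism already established in the proof of Theorem \ref{thm:main}.
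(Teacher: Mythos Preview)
Your proof is correct and follows exactly the approach the paper implicitly intends: the corollary is stated without proof immediately after Theorem \ref{thm:main}, and your argument is precisely the specialization of that theorem's computation to the base case $\lambda_0=0$ (equivalently, the $n=0$ instance of the inequality $N_{f,i}\leqslant (n+1)N_{g,i}+k_i+1$), together with the observation that this propagates to all $n\geqslant 1$ because $N_{g,i}\geqslant 0$. The separate bookkeeping for components with $N_i\leqslant 0$ in the $(\Rightarrow)$ direction is the only detail not already spelled out in the proof of Theorem \ref{thm:main}, and you handle it correctly.
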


\section{Examples}  \label{Sec5}
In this section we provide examples where we compute the sequence of multiplier ideals of meromorphic functions using the algorithm developed in Section \ref{Sec2}. In particular, we will illustrate the phenomenon described in Section \ref{Sec4}. The algorithm has been implemented with the mathematical software  {\tt Magma} \cite{magma} and is available at: 
\begin{center}
  \url{https://github.com/rogolop}  
\end{center}

\begin{example}
    Consider the holomorphic functions:
    \begin{align*}
        &f = (y^2-x^3)^4 + x^8 y^5 ,
        \\ &g = y^2-x^3.
    \end{align*}
The values of $f$ and $g$ are collected in the following vectors:
{\Small
\begin{align*}
    &N_f = ( 8, 12, 24, 28, 31, 31, 60, 92, 124, 125, 31, 31, 31, 31, 31, 31, 31, 31, 31, 31, 31, 31, 31, 31, 31, 31, 31, 31, 31, 31, 31, 31 ),
    \\ &N_g = ( 2, 3, 6, 7, 8, 9, 15, 23, 31, 31, 10, 11, 12, 13, 14, 15, 16, 17, 18, 19, 20, 21, 22, 23, 24, 25, 26, 27, 28, 29, 30, 31 ).
\end{align*}
}
Therefore, we get the divisor $\tilde{F}_0$ given by the values
{\small $$N=( 6, 9, 18, 21, 23, 22, 45, 69, 93, 94, 21, 20, 19, 18, 17, 16, 15, 14, 13, 12, 11, 10, 9, 8, 7, 6, 5, 4, 3, 2, 1, 0 ).$$}
The jumping numbers of the meromorphic function 
    $f/g$ and the generators of the corresponding multiplier ideals are:
    \[
    \renewcommand{\arraystretch}{1.2}
    \setlength{\doublerulesep}{10\arrayrulewidth}
    \begin{array}{c c c}
    \begin{array}[t]{|l|l|}
        \hline 
        \bm{\lambda_j} & \bm{\cJ((f/g)^{\lambda_j})}
        \\[0mm]\hline\hline
                 5/18 & x, y
        \\\hline 35/93 &  x^2, y
        \\\hline 13/31 & x^2, x y, y^2
        \\\hline 43/93 & x^3, x y, y^2
        \\\hline 47/93 & g, x^2 y, x^3
        \\\hline 17/31 & g, x^4, x y^2, x^2 y
        \\\hline 55/93 & g, x^4, x^3 y
        \\\hline \bm{11/18} & x g, x^4, x^3 y, y g
        \\\hline 59/93 & x^2 y^2, x^5, x g, x^3 y, y g
        \\\hline 21/31 & x^5, x^4 y, x g, y g
        \\\hline \bm{22/31} & x^2 y^2, x^2 g, x^4 y, y g, x y^3
        \\\hline 67/93 & x^3 y^2, x^2 g, x^4 y, x^6, y g
        \\\hline \bm{70/93} & x^2 g, x^3 y^2, x^6, y^2 g, x y g, x^4 y
        \\\hline 71/93 & x^2 g, x^5 y, x^6, y^2 g, x y g
        \\\hline \bm{74/93} & x^3 y^2, x^2 y^3, x^3 g, y^2 g, x y g, x^5 y
        \\\hline 25/31 & x^7, x^3 g, y^2 g, x y g, x^5 y, x^4 y^2
        \\\hline \bm{26/31} & g^2, x^7, x^2 y g, x^3 g, x^5 y, x^4 y^2
        \\\hline 79/93 & g^2, x^7, x^2 y g, x^3 g, x^6 y
        \\\hline \bm{82/93} & g^2, x y^2 g, x^4 g, x^2 y g, x^3 y^3, x^6 y, x^4 y^2
        \\\hline 83/93 & g^2, x y^2 g, x^8, x^4 g, x^2 y g, x^5 y^2, x^6 y
        \\\hline \bm{86/93} & g^2, x^6 y, x^5 y^2, x^8, x^3 y g, x^4 g
        \\\hline 29/31 & g^2, x^7 y, x^3 y g, x^8, x^4 g
        \\\hline \bm{17/18} & x^7 y, x g^2, x^8, y g^2, x^3 y g, x^4 g
        \\\hline \bm{30/31} & x^7y, x^5y^2, xg^2, x^5g, yg^2, x^3y g, x^4y^3, x^2 y^2g
        \\\hline 91/93 & x^7 y, x^9, x g^2, x^5 g, y g^2, x^3 y g, x^6 y^2, x^2 y^2 g
        \\ \hline 1 & f
        \\\hline
    \end{array}
    &\quad
    \begin{array}[t]{|l|l|}
        \hline 
        \bm{\lambda_j} & \bm{\cJ((f/g)^{\lambda_j})}
        \\[0mm]\hline\hline
                 \bm{29/18} & f \cdot ( x, y )
        \\\hline \bm{53/31} & f \cdot ( y, x^2 )
        \\\hline \bm{163/93} & f \cdot ( x^2, x y, y^2 )
        \\\hline \bm{167/93} & f \cdot ( x^3, x y, y^2 )
        \\\hline \bm{57/31} & f \cdot ( g, x^2 y, x^3 )
        \\\hline \bm{175/93} & f \cdot ( g, x^4, x y^2, x^2 y )
        \\\hline \bm{179/93} & f \cdot ( g, x^4, x^3 y )
        \\\hline \bm{35/18} & f \cdot ( x g, x^4, x^3 y, y g )
        \\\hline \bm{61/31} & f \cdot ( x^2 y^2, x^5, x g, x^3 y, y g )
        \\\hline
                 2 & f^2
        \\\hline \bm{53/18} & f^2 \cdot ( x, y )
        \\\hline
                 3 & f^3
        \\\hline
                 4 & f^4
        \\\hline
                 5 & f^5
        \\\hline
                 6 & f^6
        \\\hline
                 7 & f^7
        \\\hline
                 8 & f^8
        \\\hline
                 9 & f^9
        \\\hline
        \multicolumn{2}{c}{\vdots}
    \end{array}
    \end{array}
    \]
\end{example}


Notice that the jumping numbers larger than $3$ are only integer numbers which illustrates Theorem \ref{thm:main}. We also point out that the version of Skoda's theorem \ref{PropSkoda} gives us
$$
\cJ \left( \left( \frac{f}{g}\right)^{\frac{29}{18}}\right)
=
\cJ \left( \left( \frac{f}{g}\right)^{\frac{11}{18}+ 1}\right)
= 
f \cdot \left( \cJ \left( \left(\frac{f}{g}\right)^{\frac{11}{18}}\right): g \right)
=
f \cdot \cJ \left( \left( \frac{f}{g}\right)^{\frac{5}{18}}\right).
$$
In this case $\cJ \left( \left( \frac{f}{g}\right)^{\frac{11}{18}}\right)$ is the first ideal that does not contain $g$. A similar phenomenon happens for the jumping numbers stated in bold at the table.

\vskip 3mm

\begin{example}
    Consider the holomorphic funcions:
    \begin{align*}
        &f = (y^2-x^3)^4 + x^8 y^5,
        \\&g' = y^2+x^3.
    \end{align*}
    The values of $f$ and $g$ are collected in the following vectors:
{\Small
\begin{align*}
    &N_f = ( 8, 12, 24, 28, 31, 60, 92, 124, 125, 24, 24, 24, 24, 24, 24, 24, 24, 24, 24, 24, 24, 24, 24, 24, 24, 24, 24 ),
    \\ &N_{g'} = ( 2, 3, 6, 6, 6, 12, 18, 24, 24, 7, 8, 9, 10, 11, 12, 13, 14, 15, 16, 17, 18, 19, 20, 21, 22, 23, 24 ).
\end{align*}
}
Therefore, we get the divisor $\tilde{F}_0$ given by the values
{\small $$N=( 6, 9, 18, 22, 25, 48, 74, 100, 101, 17, 16, 15, 14, 13, 12, 11, 10, 9, 8, 7, 6, 5, 4, 3, 2, 1, 0 ).$$}
    Comparing the jumping numbers of $f/g'$ with the previous example (in which $g = y^2-x^3$ and thus the log resolution of $f/g$ and $f/g'$ differ):
    
    \begin{table}[h!]
        \centering
        \renewcommand{\arraystretch}{1.5}
        \linespread{1.4}\selectfont
        \begin{tabular}{| p{0.5\linewidth} | p{0.5\linewidth} |}
        \hline
        $\bm{g' = y^2+x^3}$ & $\bm{g = y^2-x^3}$
        \\\hline\hline
        $\frac{27}{100}$, $\frac{7}{20}$, $\frac{39}{100}$, $\frac{43}{100}$, $\frac{47}{100}$, $\frac{51}{100}$, $\frac{11}{20}$, $\frac{29}{50}$, $\frac{59}{100}$, $\frac{63}{100}$, $\frac{
33}{50}$, $\frac{67}{100}$, $\frac{7}{10}$, $\frac{71}{100}$, $\frac{37}{50}$, $\frac{3}{4}$, $\frac{39}{50}$, $\frac{79}{100}$, $\frac{41}{50}$, $\frac{83}{100}$, $\frac{43}{50}$, $\frac{
87}{100}$, $\frac{89}{100}$, $\frac{9}{10}$, $\frac{91}{100}$, $\frac{47}{50}$, $\frac{19}{20}$, $\frac{97}{100}$, $\frac{49}{50}$, $\frac{99}{100}$
        &
        $\frac{5}{18}$, $\frac{35}{93}$, $\frac{13}{31}$, $\frac{43}{93}$, $\frac{47}{93}$, $\frac{17}{31}$, $\frac{55}{93}$, $\frac{11}{18}$, $\frac{59}{93}$, $\frac{21}{31}$, $\frac{22}{31}$, $\frac{
67}{93}$, $\frac{70}{93}$, $\frac{71}{93}$, $\frac{74}{93}$, $\frac{25}{31}$, $\frac{26}{31}$, $\frac{79}{93}$, $\frac{82}{93}$, $\frac{83}{93}$, $\frac{86}{93}$, $\frac{29}{31}$, $\frac{
17}{18}$, $\frac{30}{31}$, $\frac{91}{93}$
        \\\hline
        $1$, $\frac{151}{100}$, $\frac{159}{100}$, $\frac{163}{100}$, $\frac{167}{100}$, $\frac{171}{100}$, $\frac{7}{4}$, $\frac{179}{100}$, $\frac{91}{50}$, $\frac{183}{100}$, $\frac{
187}{100}$, $\frac{19}{10}$, $\frac{191}{100}$, $\frac{97}{50}$, $\frac{39}{20}$, $\frac{99}{50}$, $\frac{199}{100}$
        &
        $1$, $\frac{29}{18}$, $\frac{53}{31}$, $\frac{163}{93}$, $\frac{167}{93}$, $\frac{57}{31}$, $\frac{175}{93}$, $\frac{179}{93}$, $\frac{35}{18}$, $\frac{61}{31}$
        \\\hline
        $2$, $\frac{11}{4}$, $\frac{283}{100}$, $\frac{287}{100}$, $\frac{291}{100}$, $\frac{59}{20}$, $\frac{299}{100}$
        &
        $2$, $\frac{53}{18}$
        \\\hline
        $3$, $\frac{399}{100}$
        &
        $3$
        \\\hline
        $4$ & $4$
        \\\hline
        \multicolumn{1}{c}{\vdots} & \multicolumn{1}{c}{\vdots}
        \end{tabular}
    \end{table}
    
    The curves $g=y^2 - x^3$ and $g'= y^2+ x^3$ are analytically isomorphic. The difference in the jumping numbers is due to the different contact with the curve defined by $f$.
    The higher contact results in less jumping numbers.
\end{example}


\begin{example}
    Consider the holomorphic functions
    \begin{align*}
        &f = (y^2-x^3)^5 + x^{18},
        \\&g_k = (y^2-x^3)^k,
    \end{align*}
    with $k\in\mathbb{Z}_{\geqslant0}$. The contact of $g_k$ with $f$ increases with the exponent $k$, thus we get less jumping numbers.

    \begin{table}[htbp]
        \centering
        \renewcommand{\arraystretch}{1.5}
        \linespread{1.4}\selectfont
        \begin{tabular}{| p{0.3\linewidth} | p{0.25\linewidth} | p{0.12\linewidth} | p{0.07\linewidth} | p{0.07\linewidth} | p{0.07\linewidth} |}
        \hline
        $\bm{k=0}$ & $\bm{k=1}$ & $\bm{k=2}$ & $\bm{k=3}$ & $\bm{k=4}$ & $\bm{k\geqslant 5}$
        \\\hline\hline
        $\frac{1}{6}$, $\frac{41}{180}$, $\frac{23}{90}$, $\frac{17}{60}$, $\frac{14}{45}$, $\frac{61}{180}$, $\frac{11}{30}$, $\frac{71}{180}$, $\frac{19}{45}$, $\frac{77}{180}$, $\frac{9}{20}$, $\frac{41}{90}$, $\frac{43}{90}$, $\frac{29}{60}$, $\frac{91}{180}$, $\frac{23}{45}$, $\frac{8}{15}$, $\frac{97}{180}$, $\frac{101}{180}$, $\frac{17}{30}$, $\frac{53}{90}$, $\frac{107}{180}$, $\frac{37}{60}$, $\frac{28}{45}$, $\frac{113}{180}$, $\frac{29}{45}$, $\frac{13}{20}$, $\frac{59}{90}$, $\frac{121}{180}$, $\frac{61}{90}$, $\frac{41}{60}$, $\frac{7}{10}$, $\frac{127}{180}$, $\frac{32}{45}$, $\frac{131}{180}$, $\frac{11}{15}$, $\frac{133}{180}$, $\frac{34}{45}$, $\frac{137}{180}$, $\frac{23}{30}$, $\frac{47}{60}$, $\frac{71}{90}$, $\frac{143}{180}$, $\frac{73}{90}$, $\frac{49}{60}$, $\frac{37}{45}$, $\frac{149}{180}$, $\frac{151}{180}$, $\frac{38}{45}$, $\frac{17}{20}$, $\frac{77}{90}$, $\frac{13}{15}$, $\frac{157}{180}$, $\frac{79}{90}$, $\frac{53}{60}$, $\frac{161}{180}$, $\frac{9}{10}$, $\frac{163}{180}$, $\frac{41}{45}$, $\frac{83}{90}$, $\frac{167}{180}$, $\frac{14}{15}$, $\frac{169}{180}$, $\frac{19}{20}$, $\frac{43}{45}$, $\frac{173}{180}$, $\frac{29}{30}$, $\frac{44}{45}$, $\frac{59}{60}$, $\frac{89}{90}$, $\frac{179}{180}$
        &
        $\frac{5}{24}$, $\frac{41}{144}$, $\frac{23}{72}$, $\frac{17}{48}$, $\frac{7}{18}$, $\frac{61}{144}$, $\frac{11}{24}$, $\frac{71}{144}$, $\frac{19}{36}$, $\frac{77}{144}$, $\frac{9}{16}$, $\frac{41}{72}$, $\frac{43}{72}$, $\frac{29}{48}$, $\frac{91}{144}$, $\frac{23}{36}$, $\frac{2}{3}$, $\frac{97}{144}$, $\frac{101}{144}$, $\frac{17}{24}$, $\frac{53}{72}$, $\frac{107}{144}$, $\frac{37}{48}$, $\frac{7}{9}$, $\frac{113}{144}$, $\frac{29}{36}$, $\frac{13}{16}$, $\frac{59}{72}$, $\frac{121}{144}$, $\frac{61}{72}$, $\frac{41}{48}$, $\frac{7}{8}$, $\frac{127}{144}$, $\frac{8}{9}$, $\frac{131}{144}$, $\frac{11}{12}$, $\frac{133}{144}$, $\frac{17}{18}$, $\frac{137}{144}$, $\frac{23}{24}$, $\frac{47}{48}$, $\frac{71}{72}$, $\frac{143}{144}$
        & 
        $\frac{5}{18}$, $\frac{41}{108}$, $\frac{23}{54}$, $\frac{17}{36}$, $\frac{14}{27}$, $\frac{61}{108}$, $\frac{11}{18}$, $\frac{71}{108}$, $\frac{19}{27}$, $\frac{77}{108}$, $\frac{3}{4}$, $\frac{41}{54}$, $\frac{43}{54}$, $\frac{29}{36}$, $\frac{91}{108}$, $\frac{23}{27}$, $\frac{8}{9}$, $\frac{97}{108}$, $\frac{101}{108}$, $\frac{17}{18}$, $\frac{53}{54}$, $\frac{107}{108}$
        &
        $\frac{5}{12}$, $\frac{41}{72}$, $\frac{23}{36}$, $\frac{17}{24}$, $\frac{7}{9}$, $\frac{61}{72}$, $\frac{11}{12}$, $\frac{71}{72}$
        &
        $\frac{5}{6}$
        &
        \\\hline
        $1+($jumping numbers \newline between $0$ and $1)$
        &
        $1$, $\frac{35}{24}$, $\frac{221}{144}$, $\frac{113}{72}$, $\frac{77}{48}$, $\frac{59}{36}$, $\frac{241}{144}$, $\frac{41}{24}$, $\frac{251}{144}$, $\frac{16}{9}$, $\frac{257}{144}$, $\frac{29}{16}$, $\frac{131}{72}$, $\frac{133}{72}$, $\frac{89}{48}$, $\frac{271}{144}$, $\frac{17}{9}$, $\frac{23}{12}$, $\frac{277}{144}$, $\frac{281}{144}$, $\frac{47}{24}$, $\frac{143}{72}$, $\frac{287}{144}$
        & 
        $1$, $\frac{35}{18}$
        &
        $1$ & $1$ & $1$
        \\\hline
        $2+($jumping numbers \newline between $0$ and $1)$
        &
        $2$, $\frac{65}{24}$, $\frac{401}{144}$, $\frac{203}{72}$, $\frac{137}{48}$, $\frac{26}{9}$, $\frac{421}{144}$, $\frac{71}{24}$, $\frac{431}{144}$
        & 
        $2$ & $2$ & $2$ & $2$
        \\\hline
        $3+($jumping numbers \newline between $0$ and $1)$
        &
        $3$, $\frac{95}{24}$
        &
        $3$ & $3$ & $3$ & $3$
        \\\hline
        $4+($jumping numbers \newline between $0$ and $1)$
        &
        $4$ & $4$ & $4$ & $4$ & $4$
        \\\hline
        \multicolumn{1}{c}{\vdots} & \multicolumn{1}{c}{\vdots} & \multicolumn{1}{c}{\vdots} & \multicolumn{1}{c}{\vdots} & \multicolumn{1}{c}{\vdots} & \multicolumn{1}{c}{\vdots}
        \end{tabular}
    \end{table}
\end{example}

\newpage
\bibliographystyle{amsalpha} 
\def\cprime{$'$} \def\cprime{$'$} \def\cprime{$'$}
\providecommand{\bysame}{\leavevmode\hbox to3em{\hrulefill}\thinspace}
\providecommand{\MR}{\relax\ifhmode\unskip\space\fi MR }
\providecommand{\MRhref}[2]{%
  \href{http://www.ams.org/mathscinet-getitem?mr=#1}{#2}
}
\providecommand{\href}[2]{#2}


\end{document}